\title{Homologically arc-homogeneous ENRs}
\author{J\,L Bryant}
\address{3005 Brandemere Drive\\Tallahassee\\Florida 32312\\USA}
\email{jbryant@math.fsu.edu}
\urladdr{}
\def\cnewtheorem#1[#2]#3{\newtheorem{#1}{#3}[section]
\expandafter\let\csname c@#1\endcsname\c@theorem}
\let\xysavmatrix\xymatrix
\def\xymatrix{\disablesubscriptcorrection\xysavmatrix}
\let\wwcheck\check
\newtheorem{theorem}{Theorem}[section]
\theoremstyle{definition}
\newtheorem*{definitions}{Definitions}
\newcommand{\im}{\operatorname{im}}
\newcommand{\intr}{\operatorname{int}}
\def\enr{\textrm{ENR}}
\begin{document}

\begin{abstract} 
We prove that an arc-homogeneous Euclidean neighborhood
retract is a homology manifold.
\end{abstract}

\maketitle

\section{Introduction}
\label{S-intro}
The so-called Modified Bing--Borsuk Conjecture, which grew out of a
question in \cite{rBB65}, asserts that a homogeneous Euclidean
neighborhood retract is a homology manifold.  At this mini-workshop on
exotic homology manifolds, Frank Quinn asked whether a space that
satisfies a similar property, which he calls \emph{homological
arc-homogeneity}, is a homology manifold.  The purpose of this note is
to show that the answer to this question is yes.

\section{Statement and proof of the main result} \label{S-state}

\begin{theorem}\label{T-main}
Suppose that $X$ is an $n$--dimensional  homologically arc-homogeneous \enr.
Then $X$ is a homology $n$--manifold.
\end{theorem}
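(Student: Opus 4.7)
The plan is to show that at every point $x \in X$ the local homology $H_*(X, X \setminus \{x\};\mathbb{Z})$ agrees with that of $\mathbb{R}^n$ at the origin; together with the \enr\ hypothesis this is precisely what it means for $X$ to be a homology $n$--manifold.

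First, I would unpack the definition of \emph{homologically arc-homogeneous} to extract, for every arc $\alpha\colon I \to X$, a natural isomorphism
\[
H_*(X, X \setminus \{\alpha(0)\}) \;\cong\; H_*(X, X \setminus \{\alpha(1)\}).
\]
Such a property typically asserts that removing the graph of $\alpha$ from $X \times I$ has the same homological effect as removing just the fibre above the initial time, so that evaluation at the two endpoints furnishes the desired identification. Combined with the fact that every \enr\ is locally contractible, and hence locally path connected, this gives that arc-components coincide with (open) connected components and that the isomorphism class of the local homology is locally constant on $X$.

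The harder step is then to produce, in each component of $X$, at least one point $x_0$ at which $H_n(X, X \setminus \{x_0\}) \cong \mathbb{Z}$ and $H_i(X, X \setminus \{x_0\}) = 0$ for $i \neq n$. To get the nonvanishing in top degree, I would embed $X$ as a closed subset of some $\mathbb{R}^N$ and use that $\dim X = n$, together with a \v{C}ech-cohomology or Alexander-duality computation against a neighbourhood retraction, to force $H_n(X, X \setminus \{x\}) \neq 0$ on an open dense set. To annihilate the lower-degree groups, I would join two candidate points by a well-chosen arc, apply the arc-homogeneity hypothesis to the complement of its graph in $X \times I$, and run a Mayer--Vietoris or excision argument; the local constancy established in the first step then propagates the improved vanishing throughout a component.

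Combining these steps, every point of $X$ would have the local homology of $\mathbb{R}^n$, and so $X$ is a homology $n$--manifold. I expect the main obstacle to lie in the second step: arc-homogeneity alone supplies only relative information (all local homology groups are isomorphic along an arc), so the absolute identification with the local homology of Euclidean space has to be extracted from the \enr\ and dimension hypotheses, and marrying these two ingredients cleanly is the crux of the argument.
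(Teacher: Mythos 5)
Your first step is sound in outline, but it glosses a point the paper has to work for: to use local homology profitably you need more than that the stalks $H_k(X,X-x)$ are abstractly isomorphic along arcs --- you need the transport isomorphism $\alpha_*$ to be (locally) independent of the choice of arc, so that the local homology sheaf $\mathcal{H}_k$ is genuinely locally constant. That well-definedness is where most of the paper's elementary work goes (a local uniqueness lemma obtained by lifting a class of $H_k(X,X-U)$ over $U\times I$, then invariance under path-homotopy by a subdivision argument), and your proposal does not supply it.

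The real gap is your second step, which you yourself flag as the crux: no mechanism is given for producing even one point whose local homology is exactly that of $\mathbb{R}^n$. Dimension theory plus Alexander duality against a neighborhood of an embedding in $\mathbb{R}^N$ can at best give nonvanishing of $H_n(X,X-x)$ on a dense set; it does not pin the top group to $\mathbb{Z}$, and nothing in the proposal kills the groups in degrees below $n$ --- as you note, arc-homogeneity only transports local homology, it never computes it, and ``a Mayer--Vietoris or excision argument'' is not an argument here. The paper sidesteps this entirely by quoting Bredon's theorem: an $n$--dimensional, cohomologically locally connected space whose local homology groups are finitely generated and whose sheaves $\mathcal{H}_k$ are locally constant is automatically a homology $n$--manifold. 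So the only remaining work is finite generation of $H_k(X,X-x)$, which the paper gets from a mapping cylinder neighborhood $N=C_\phi$ of $X$ in $\mathbb{R}^{n+m}$, the duality $H_k(X,X-A)\cong \wwcheck H_c^{\,n+m-k}(A^*,\dot A)$, and a Leray spectral sequence comparison forcing the restriction $H^q(V^*,\dot V)\to \wwcheck H^q(x^*,\dot x)$ to be onto, hence factoring through a compact manifold pair. Without either Bredon's theorem or a substitute for it, your plan as stated cannot be completed.
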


\begin{definitions}
A \emph{homology $n$--manifold} is a space $X$ having the property that,
for each $x\in X$,
$$H_k(X,X-x;{\mathbb{Z}}) \cong\begin{cases}
      \mathbb{Z}& k=n \\
      0& k\not= n.
\end{cases}$$
A \emph{Euclidean neighborhood retract} (\enr) is a space
homeomorphic to a closed subset of Euclidean space that is a
retract of some neighborhood of itself. A space $X$ is {\em
homologically arc-homogeneous}  provided that for every path
$\alpha\colon [0,1]\to X$, the inclusion induced map $$H_*(X\times
0, X\times 0 - (\alpha(0),0))\to H_*(X\times I,X\times I  -
\Gamma(\alpha))$$ is an isomorphism, where $\Gamma(\alpha)$
denotes the graph of $\alpha$.  The \emph{local homology sheaf}
$\mathcal{H}_k$ \emph{in dimension $k$} on a space $X$ is the sheaf with
stalks $H_k(X,X-x)$, $x\in X$.
\end{definitions}

By a result of Bredon \cite[Theorem~15.2]{gB67}, if an $n$--dimensional
space $X$ is cohomologically locally connected (over $\mathbb{Z}$),
has finitely generated local homology groups $H_k(X,X-x)$ for each
$k$, and if each $\mathcal{H}_k$ is locally constant, then $X$ is a
homology manifold.  We shall show that an $n$--dimensional, homologically
arc-connected  \enr\ satisfies the hypotheses of Bredon's theorem.

Assume from now on that $X$ represents an $n$--dimensional, homologically
arc-hom\-o\-gen\-e\-ous \enr.  Unless otherwise specified, all homology groups
are assumed to have integer coefficients. The following lemma is a
straightforward application of the definition and the Mayer--Vietoris
theorem.

\begin{lemma}\label{L-iso}
Given a path $\alpha\colon [0,1]\to X$ and $t\in [0,1]$, the inclusion
induced map $$H_*(X\times t, X\times t - (\alpha(t),t))\to H_*(X\times
I,X\times I  - \Gamma(\alpha))$$ is an isomorphism.
\end{lemma}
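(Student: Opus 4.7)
The case $t=0$ is the definition of homological arc-homogeneity applied directly to $\alpha$. For general $t\in(0,1]$, the plan is to split $\alpha$ at $t$ into $\alpha_1=\alpha|_{[0,t]}$ and $\alpha_2=\alpha|_{[t,1]}$, apply the hypothesis to reparametrized versions of the two pieces, and glue the results with a relative Mayer--Vietoris sequence for the decomposition $X\times I=X\times[0,t]\cup X\times[t,1]$.

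Concretely, set $\tilde\alpha_2(s)=\alpha(t+s(1-t))$ on $[0,1]$; the hypothesis applied to $\tilde\alpha_2$, pulled back along the homeomorphism $X\times I\to X\times[t,1]$, $(x,s)\mapsto(x,t+s(1-t))$, produces an isomorphism
\[
H_*(X\times t,X\times t-(\alpha(t),t))\xrightarrow{\cong}H_*(X\times[t,1],X\times[t,1]-\Gamma(\alpha_2)).
\]
Applying the hypothesis instead to the reversed path $\bar\alpha_1(s)=\alpha(t(1-s))$, which starts at $\alpha(t)$, and pulling back along $X\times I\to X\times[0,t]$, $(x,s)\mapsto(x,t(1-s))$, gives the companion isomorphism
\[
H_*(X\times t,X\times t-(\alpha(t),t))\xrightarrow{\cong}H_*(X\times[0,t],X\times[0,t]-\Gamma(\alpha_1)).
\]

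Writing $Y_i$ for the two half-cylinders and $\Gamma_i=\Gamma(\alpha_i)$, one has $Y_1\cap Y_2=X\times t$ and $\Gamma_1\cap\Gamma_2=\{(\alpha(t),t)\}$. Since $X$ is an ENR and $X\times t$ has a product collar in $X\times I$, the relative Mayer--Vietoris sequence is available for the pairs $(Y_i,Y_i-\Gamma_i)$ sitting inside $(X\times I,X\times I-\Gamma(\alpha))$. Its first map
\[
H_k(X\times t,X\times t-(\alpha(t),t))\to H_k(Y_1,Y_1-\Gamma_1)\oplus H_k(Y_2,Y_2-\Gamma_2)
\]
is injective in every degree thanks to the two isomorphisms above, so exactness forces the connecting homomorphisms to vanish and the sequence breaks into short exact sequences. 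A brief chase then identifies the inclusion-induced map $H_k(X\times t,X\times t-(\alpha(t),t))\to H_k(X\times I,X\times I-\Gamma(\alpha))$ as an isomorphism, since it agrees with either of the two constructed isomorphisms composed with the inclusion $Y_i\hookrightarrow X\times I$.

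The diagram chase is routine; the only technical point that warrants care is the excisiveness needed to invoke the relative Mayer--Vietoris sequence across the codimension-one cross-section $X\times t$. This is handled by replacing the closed cover $\{Y_1,Y_2\}$ by the open collar-thickenings $X\times[0,t+\epsilon)$ and $X\times(t-\epsilon,1]$, whose inclusions into $Y_1,Y_2$ respectively are homotopy equivalences of pairs, so nothing beyond the ENR hypothesis is needed.
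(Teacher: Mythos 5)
Your main line of argument is exactly the one the paper has in mind (it dismisses the lemma as ``a straightforward application of the definition and the Mayer--Vietoris theorem''): split at $t$, apply homological arc-homogeneity to the reparametrized second half and the reversed first half to see that the slice pair $(X\times t,\,X\times t-(\alpha(t),t))$ includes isomorphically into each half-cylinder pair $(Y_i,Y_i-\Gamma_i)$, and then run the relative Mayer--Vietoris sequence; your algebraic chase (injectivity of the first map, vanishing connecting homomorphisms, identification of the composite with the inclusion into $X\times I$) is correct.

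The gap is precisely in the point you flagged, and your proposed fix does not work as stated. The inclusion $(Y_1,\,Y_1-\Gamma_1)\hookrightarrow\bigl(X\times[0,t+\epsilon),\,X\times[0,t+\epsilon)-\Gamma(\alpha)\bigr)$ is not visibly a homotopy equivalence of pairs: the only candidate homotopy is the collar compression $(x,s)\mapsto(x,\min\{s,t\})$, and it fails to preserve the complement of the graph, since a point $(x,s)$ with $t<s<t+\epsilon$ and $x=\alpha(u)$ for some $u\in[t,s)$ is pushed across $\Gamma(\alpha)$ (and onto it if $x=\alpha(t)$); the graph is not vertical over $[t,t+\epsilon]$, and no hypothesis on $X$ or $\alpha$ lets you reroute. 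Moreover, even granting those two equivalences, the Mayer--Vietoris sequence for the open cover has intersection term $H_*\bigl(X\times(t-\epsilon,t+\epsilon),\,X\times(t-\epsilon,t+\epsilon)-\Gamma(\alpha)\bigr)$, which you would still have to identify with $H_*(X\times t,\,X\times t-(\alpha(t),t))$; the hypothesis only controls slices at the endpoints of an interval, so invoking it for the interior slice $t$ is exactly the statement being proved. The clean repair is to not thicken at all: $Y_1-\Gamma_1$, $Y_2-\Gamma_2$, their intersection $X\times t-(\alpha(t),t)$, and $X\times I-\Gamma(\alpha)$ are all ANRs (open subsets of products of an ENR with an interval), and the inclusions among them are closed; closed inclusions of ANRs are cofibrations, so both closed triads $(X\times I;\,Y_1,Y_2)$ and $(X\times I-\Gamma(\alpha);\,Y_1-\Gamma_1,\,Y_2-\Gamma_2)$ are excisive and the relative Mayer--Vietoris sequence is available for your closed decomposition directly. (Alternatively, the two half-cylinder comparisons can be recovered from the hypothesis by a direct limit over $X\times[0,t+\epsilon']$ with $\epsilon'<\epsilon$, but the intersection term still forces the cofibration argument.) With that substitution your proof is complete and coincides with the paper's intended argument.
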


Given points $x,y\in X$, an arc $\alpha\colon I\to X$ from $x$ to $y$,
and an integer $k\ge 0$, let  $\alpha_*\colon H_k(X,X-x) \to H_k(X,X-y)$
be defined by the composition
$$\xymatrix{H_k(X,X-x)\ar[r]^-{\times 0}&H_*(X\times I,X\times I  -
\Gamma(\alpha))&H_k(X,X-y)\ar[l]_-{\times 1}.}$$
Clearly $(\alpha^{-1})_* = \alpha_*^{-1}$ and $(\alpha\beta)_* =
\beta_*\alpha_*$, whenever $\alpha\beta$ is defined.

\begin{lemma}\label{L-locuniq}
Given $x\in X$ and $\eta\in H_k(X,X-x)$, there is a neighborhood $U$
of $x$ in $X$ such that if
$\alpha$ and $\beta$ are paths in $U$ from $x$ to $y$,  then
$\alpha_*(\eta)=\beta_*(\eta)\in H_k(X,X-y)$.
\end{lemma}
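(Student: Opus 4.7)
The plan is to represent $\eta$ by a specific singular cycle and to take $U$ to be the complement in $X$ of the support of that cycle's boundary.  A path $\alpha$ in such a $U$ then avoids the only obstruction to carrying the representative rigidly along $\alpha$, so $\alpha_*(\eta)$ can be read off directly from the cycle, yielding an answer that depends only on the endpoint $y$.

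First I would pick a singular $k$--cycle $c\in C_k(X)$ whose class in $H_k(X,X-x)$ is $\eta$, so that $\partial c$ is a singular chain in $X-x$.  Let $K\subset X-x$ be the (compact) support of $\partial c$; since $X$ is Hausdorff, $K$ is closed in $X$, and so $U:=X-K$ is an open neighborhood of $x$.  For any $y\in U$, the chain $c$ also represents a well-defined class $\tilde\eta:=[c]\in H_k(X,X-y)$, because $\partial c\subset K\subset X-y$.

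Given any path $\alpha\colon I\to U$ from $x$ to $y$, the graph $\Gamma(\alpha)$ lies in $U\times I$ and hence is disjoint from $K\times I$.  The singular-chain product formula
\[
\partial\bigl(c\times[0,1]\bigr) \;=\; \partial c\times[0,1] \;+\; (-1)^k\bigl(c\times 1 - c\times 0\bigr)
\]
then shows that $c\times 0$ and $c\times 1$ represent the same class in $H_k(X\times I,X\times I-\Gamma(\alpha))$: the term $\partial c\times[0,1]$ is supported in $K\times I\subset X\times I-\Gamma(\alpha)$, and $c\times[0,1]$ is the bounding chain in $X\times I$ that realizes the equivalence.

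Finally, $[c\times 0]$ is the image of $\eta$ under the $\times 0$ inclusion, and $[c\times 1]$ is the image of $\tilde\eta$ under the $\times 1$ inclusion.  The defining formula for $\alpha_*$, combined with the isomorphism property of $\times 1$ furnished by Lemma~\ref{L-iso} at $t=1$, then forces $\alpha_*(\eta)=\tilde\eta$.  Since $\tilde\eta$ depends only on $c$ and $y$, the same argument applied to $\beta$ gives $\beta_*(\eta)=\tilde\eta=\alpha_*(\eta)$.  No serious obstacle is anticipated; the only point requiring care is verifying that the relevant singular chains meet the appropriate complement of $\Gamma(\alpha)$, and this is immediate from the choice $U=X-K$.
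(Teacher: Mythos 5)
Your proof is correct, and it is essentially a chain-level realization of the same mechanism the paper uses, so the difference is in execution rather than in the underlying idea. The paper appeals to the fact that $H_k(X,X-x)$ is the direct limit of the groups $H_k(X,X-W)$ over neighborhoods $W$ of $x$ to lift $\eta$ to a class $\eta_{\scriptscriptstyle U}\in H_k(X,X-U)$, and then chases a diagram through $H_k(X\times I,X\times I-U\times I)$ to conclude that the images of $\eta$ under the $\times 0$ and $\times 1$ inclusions into $H_k(X\times I,X\times I-\Gamma(\alpha))$ agree; your choice of a representative relative cycle $c$ and of $U=X-K$, with $K$ the support of $\partial c$, is exactly a concrete witness of that lift (the class $[c]\in H_k(X,X-U)$ maps to $\eta$), and the prism identity for $c\times[0,1]$ is the chain-level form of the diagram chase, since $\partial c\times[0,1]$ is carried by $K\times I=(X-U)\times I\subseteq X\times I-\Gamma(\alpha)$. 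What your version buys is that it is self-contained (no appeal to the direct-limit description of local homology, and no reduction to the loop case, which the paper uses as an intermediate reformulation) and that it exhibits an explicit common value: $\alpha_*(\eta)=[c]\in H_k(X,X-y)$ for every path $\alpha$ in $U$ from $x$ to $y$, i.e.\ $\alpha_*(\eta)$ is the image of the lifted class under $H_k(X,X-U)\to H_k(X,X-y)$, which is the substance of Corollary~\ref{C-locuniq-two} that the paper states separately for use in Lemma~\ref{L-uniq}. The only step that genuinely needs citing is the invertibility (injectivity suffices) of the $\times 1$ map, and you correctly justify it by Lemma~\ref{L-iso} at $t=1$.
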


\begin{proof}
We will prove the equivalent statement:  for each $x\in X$ and $\eta\in
H_k(X,X-x)$ there is a neighborhood $U$ of $x$ with $\alpha_*(\eta)= \eta$
for any loop $\alpha$ in $U$ based at $x$.

Suppose $x\in X$ and $\eta\in H_k(X,X-x)$.  Since $H_k(X,X-x)$ is the
direct limit of the groups $H_k(X,X-W)$, where $W$ ranges over the
(open) neighborhoods of $x$ in $X$, there is a neighborhood $U$ of $x$
and an $\eta_{\scriptscriptstyle U}\in H_k(X,X-U)$ that goes to $\eta$
under the inclusion $H_k(X,X-U)\to H_k(X,X-x)$.

Suppose $\alpha$ is a loop in $U$ based at $x$.
Let $\eta_{\alpha}\in H_k(X\times I,X\times I-\Gamma(\alpha))$ correspond
to $\eta$ under the isomorphism $\xymatrix{H_k(X,X-x)\ar[r]^-{\times
0}& H_k(X\times I,X\times I-\Gamma(\alpha))}$ guaranteed by homological
arc-homogeneity.

Let
$$\eta_{\scriptscriptstyle U\times I}=\eta_{\scriptscriptstyle
U}\times 0\in H_k(X\times I,X\times I-U\times I).$$
Then the image
of $\eta_{\scriptscriptstyle U\times I}$ in
$H_k(X\times I,X\times I-\Gamma(\alpha))$
is $\eta_{\alpha}$, as can be seen by chasing the
following diagram around the lower square.
\[\xymatrix{H_k(X,X-U)\ar[r]\ar[d]_{\cong}^{\times 1} &
H_k(X,X-x)\ar[d]^{\times 1}_{\cong}\\
        H_k(X\times I,X\times I-U\times I)\ar[r] & H_k(X\times I,X\times
        I-\Gamma(\alpha))\\
        H_k(X,X-U)\ar[r]\ar[u]^{\cong}_{\times 0} &
        H_k(X,X-x)\ar[u]_{\times 0}^{\cong}
        }\]
But from the upper square we see that $\eta_{\alpha}$ must also come
from $\eta$ after including into $X\times 1$.    That is, $\alpha_*(\eta)
= \eta$.  \end{proof}

\begin{corollary}\label{C-locuniq-two}
Suppose the neighborhood $U$ above is path connected and $F$ is the cyclic
subgroup of $H_k(X,X-U)$ generated by $\eta_{\scriptscriptstyle U}$.
Then, for every $y\in U$, the inclusion $H_k(X,X-U)\to H_k(X,X-y)$ takes
$F$ one-to-one onto the subgroup $F_y$ generated by $\alpha_*(\eta)$,
where $\alpha$ is any path in $U$ from $x$ to $y$.
\end{corollary}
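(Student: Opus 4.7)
The plan is to separate the corollary into two claims about the inclusion-induced map $\phi_y\colon H_k(X,X-U)\to H_k(X,X-y)$ restricted to $F$: (a) $\phi_y(F)=F_y$, and (b) $\phi_y|_F$ is injective.

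For (a), I would reuse the six-term diagram from the proof of Lemma \ref{L-locuniq}, but with the right-hand column adjusted: the bottom-right entry remains $H_k(X,X-x)$, while the top-right entry becomes $H_k(X,X-y)$, and $\alpha$ is now taken to be a path from $x$ to $y$ rather than a loop at $x$. Both right verticals are still isomorphisms---the bottom by homological arc-homogeneity (since $\alpha(0)=x$) and the top by Lemma \ref{L-iso} (since $\alpha(1)=y$). The two left verticals remain isomorphisms and in fact agree on $\eta_U$, since the inclusions $(X,X-U)\hookrightarrow (X\times I, X\times I-U\times I)$ at $t=0$ and $t=1$ are homotopic through the identity homotopy on $X\times I$. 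Starting from $\eta_U$ and chasing as in Lemma \ref{L-locuniq}, both routes to $H_k(X\times I, X\times I-\Gamma(\alpha))$ produce the common class $\eta_{\alpha}:=\eta\times 0$. Commutativity of the upper square then forces $\phi_y(\eta_U)\times 1=\eta_\alpha$, and by definition of $\alpha_*$ this gives $\phi_y(\eta_U)=\alpha_*(\eta)$. So $\phi_y$ sends the generator of $F$ to the generator of $F_y$, proving (a). As a byproduct, $F_y$ is well-defined independently of $\alpha$ by Lemma \ref{L-locuniq}.

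For (b), the identity from (a) yields the factorization $\phi_y|_F=\alpha_*\circ (\phi_x|_F)$ through $F_x\subseteq H_k(X,X-x)$, the cyclic subgroup generated by $\eta$. Since $\alpha_*$ is an isomorphism, $\ker(\phi_y|_F)=\ker(\phi_x|_F)$; in particular this kernel is independent of $y\in U$ and of the path $\alpha$. To conclude the kernel is trivial, I would shrink $U$ to a smaller path-connected neighborhood $U'$ and replace $\eta_U$ by its image $\eta_{U'}$; the conclusion of Lemma \ref{L-locuniq} persists for $U'$, and the direct-limit description $H_k(X,X-x)=\varinjlim_V H_k(X,X-V)$ lets us arrange that the order of $\eta_{U'}$ equals the order of $\eta$, so that $\phi_x|_F$ (and hence $\phi_y|_F$) is injective.

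The main potential obstacle is this last order-matching step: without the freedom to shrink $U$, the map $\phi_x|_F$ could in principle have a nontrivial kernel coming from a mismatch between the orders of $\eta_U$ and $\eta$. Everything else is a straightforward naturality-and-diagram-chase argument mirroring Lemma \ref{L-locuniq}.
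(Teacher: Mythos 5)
Your argument is correct, and its main part coincides with what the paper intends: the corollary is stated with no proof, the implicit justification being exactly your step (a) --- the six-term diagram from the proof of Lemma \ref{L-locuniq} run with a path $\alpha$ from $x$ to $y$ in place of a loop, the upper right vertical $\times 1$ now going from $H_k(X,X-y)$ and being an isomorphism by Lemma \ref{L-iso}; the chase shows the inclusion $H_k(X,X-U)\to H_k(X,X-y)$ sends $\eta_{\scriptscriptstyle U}$ to $\alpha_*(\eta)$, hence carries $F$ onto $F_y$, and as a byproduct shows $F_y$ does not depend on the path. What you add is a treatment of the ``one-to-one'' clause, which the paper does not justify: you are right that it is not automatic for the particular $U$ and $\eta_{\scriptscriptstyle U}$ furnished by Lemma \ref{L-locuniq}, since a priori one only knows that the order of $\eta$ divides the order of $\eta_{\scriptscriptstyle U}$, and injectivity of $F\to F_y$ amounts to these orders being equal (as $\alpha_*$ is an isomorphism). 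Your repair --- shrink to a path-connected $U'\subseteq U$ (possible because an ENR is locally path connected), replace $\eta_{\scriptscriptstyle U}$ by its image $\eta_{\scriptscriptstyle U'}$, and use $H_k(X,X-x)=\varinjlim_W H_k(X,X-W)$ to arrange that $\eta_{\scriptscriptstyle U'}$ has the same order as $\eta$, noting that the conclusion of Lemma \ref{L-locuniq} and the identity from (a) persist under shrinking --- is valid and is the natural way to make the literal statement true. It is worth observing that the injectivity is never used later: the proof of Lemma \ref{L-uniq} only needs that $\alpha_*(\eta)$ equals the image of $\eta_{\scriptscriptstyle U}$ (hence depends only on the endpoint $y$ and not on the path in $U$), so the paper's omission is harmless, but your reading --- that one may refine $U$ before asserting the bijection $F\to F_y$ --- is the right one.
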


\begin{lemma}\label{L-uniq}
Suppose $x,y\in X$ and $\alpha$ and $\beta$ are path-homotopic paths
in $X$ from $x$ to $y$.  Then $\alpha_* = \beta_*\colon H_k(X,X-x)
\to H_k(X,X-y)$.
\end{lemma}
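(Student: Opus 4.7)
The plan is to reduce Lemma~\ref{L-uniq} to the case of a nullhomotopic loop and then push Lemma~\ref{L-locuniq} through a fine subdivision of the homotopy square. Since $(\alpha\beta^{-1})_* = \beta_*^{-1}\alpha_*$, the identity $\alpha_* = \beta_*$ is equivalent to $(\alpha\beta^{-1})_* = \operatorname{id}$; and a path-homotopy rel endpoints of $\alpha$ to $\beta$ provides a nullhomotopy rel basepoint of the loop $\alpha\beta^{-1}$ at $x$. So it suffices to show: for any loop $\gamma$ at $x$ that is nullhomotopic rel basepoint and any $\eta\in H_k(X,X-x)$, we have $\gamma_*(\eta) = \eta$.

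Fix such $\eta$ and $\gamma$, and choose a nullhomotopy $H\colon I\times I\to X$ with $H(s,0)=\gamma(s)$, $H(s,1)=x$, and $H(0,t)=H(1,t)=x$. Write $\gamma_t(s)=H(s,t)$ and $f(t)=(\gamma_t)_*(\eta)\in H_k(X,X-x)$, so $f(0)=\gamma_*(\eta)$ and $f(1)=\eta$. Since $I$ is connected, it suffices to show $f$ is locally constant. Fix $t_0\in I$. I would first apply Lemma~\ref{L-locuniq} at each $\gamma_{t_0}(s)$ to the element $(\gamma_{t_0}|_{[0,s]})_*(\eta)\in H_k(X,X-\gamma_{t_0}(s))$ to obtain a neighborhood $V_s$ of $\gamma_{t_0}(s)$. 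By a Lebesgue number argument on $I$, pick a subdivision $0=s_0<s_1<\cdots<s_n=1$ so that $\gamma_{t_0}([s_{i-1},s_i])\subset V_{s_{i-1}}$ for every $i$; then by uniform continuity of $H$, shrink to obtain $\epsilon>0$ with $H([s_{i-1},s_i]\times[t_0-\epsilon,t_0+\epsilon])\subset V_{s_{i-1}}$.

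For $|t-t_0|<\epsilon$, I would compare $f(t)$ and $f(t_0)$ by induction on $i$. Set $h_i(r)=H(s_i,t_0+r(t-t_0))$, so $h_0$ and $h_n$ are constant at $x$. The inductive claim is that $(\gamma_t|_{[0,s_i]})_*(\eta)=(h_i)_*(\gamma_{t_0}|_{[0,s_i]})_*(\eta)$ in $H_k(X,X-H(s_i,t))$. Using $(\alpha\beta)_*=\beta_*\alpha_*$, the step from $i-1$ to $i$ reduces to checking that the two paths $h_{i-1}\cdot\gamma_t|_{[s_{i-1},s_i]}$ and $\gamma_{t_0}|_{[s_{i-1},s_i]}\cdot h_i$ from $H(s_{i-1},t_0)$ to $H(s_i,t)$ act identically on the element $\zeta_{i-1}:=(\gamma_{t_0}|_{[0,s_{i-1}]})_*(\eta)$. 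Both paths lie in $V_{s_{i-1}}$ by the joint containment above, so this is exactly the content of Lemma~\ref{L-locuniq} applied at $H(s_{i-1},t_0)$ with its prescribed neighborhood $V_{s_{i-1}}$. Taking $i=n$ and recalling that $h_n$ is constant gives $f(t)=f(t_0)$.

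The main obstacle is the bookkeeping in the induction: one must arrange that the element to which Lemma~\ref{L-locuniq} is applied at vertex $s_{i-1}$ is precisely the $\zeta_{i-1}$ for which $V_{s_{i-1}}$ was originally produced, and that both competing sides of each small rectangle genuinely lie inside that single $V_{s_{i-1}}$. Once these coherence conditions are arranged—which is the whole point of phrasing things in terms of the transported element $\zeta_{i-1}$ rather than $\eta$ itself—the argument reduces to a familiar grid-subdivision comparison, and the preliminary reduction to loops is what makes the one-parameter family $f(t)$ well-defined so that local constancy is the right property to establish.
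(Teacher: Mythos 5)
Your overall strategy---transport $\eta$ along the bottom of a finely subdivided homotopy square and compare the two sides of each small rectangle using \fullref{L-locuniq}---is essentially the paper's argument; the preliminary reduction to a nullhomotopic loop and the reformulation as local constancy of $f(t)=(\gamma_t)_*(\eta)$ is just a repackaging of the paper's compactness reduction to $\epsilon$--homotopic paths. The inductive bookkeeping itself is coherent: at each step your two competing paths do start at $\gamma_{t_0}(s_{i-1})$, do lie in $V_{s_{i-1}}$, and act on exactly the element $\zeta_{i-1}$ for which $V_{s_{i-1}}$ was produced.

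The gap is the subdivision step. A Lebesgue number argument gives a subdivision in which each $\gamma_{t_0}([s_{i-1},s_i])$ lies in $V_c$ for \emph{some} $c$, not in the neighborhood attached to its left endpoint, and the left-endpoint containment you assert cannot in general be arranged: \fullref{L-locuniq} gives no control over the size or shape of $V_s$, and nothing prevents every $V_s$ with $s<1$ from missing the endpoint $x=\gamma_{t_0}(1)$ (they are only required to be neighborhoods of $\gamma_{t_0}(s)$), in which case no terminal segment $[s_{n-1},1]$ can have image in $V_{s_{n-1}}$; the same obstruction can occur at interior vertices. So the step ``pick a subdivision with $\gamma_{t_0}([s_{i-1},s_i])\subset V_{s_{i-1}}$'' is not justified and may be impossible. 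The repair is standard but is exactly the content your sketch elides: take each segment inside $V_{c_i}$ for some point $c_i$ of (or near) that segment, observe that $\zeta_{i-1}$, $\zeta_{c_i}$ and $\zeta_i$ are obtained from one another by transport along subarcs of $\gamma_{t_0}$ lying in $V_{c_i}$, and apply \fullref{L-locuniq} at $\gamma_{t_0}(c_i)$ to $\zeta_{c_i}$ after re-basing both comparison paths there. This re-basing is precisely the role \fullref{C-locuniq-two} plays in the paper's proof; once it is inserted, your induction goes through and essentially coincides with the published argument.
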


\proof
By a standard compactness argument it suffices to show that, for a given
path $\alpha$ from $x$ to $y$ and element $\eta\in  H_k(X,X-x)$,  there
is an $\epsilon>0$ such that $\alpha_*(\eta) = \beta_*(\eta)$ for any path
$\beta$ from $x$ to $y$ $\epsilon$--homotopic (rel $\{x,y\}$) to $\alpha$.

Given a path $\alpha$ from $x$ to $y$, $\eta\in  H_k(X,X-x)$, and
$t\in I$, let $U_t$ be a path-connected neighborhood of $\alpha(t)$
associated with $(\alpha_t)_*(\eta)\in  H_k(X,X-\alpha(t))$ given by
\fullref{L-locuniq}, where $\alpha_t$ is the path $\alpha\big|[0,t]$.
There is a subdivision
$$\{0 = t_0 < t_1 < \cdots < t_m = 1\}$$
of $I$ such that
$\alpha\bigl([t_{i-1},t_i]\bigr)\subseteq U_i$
for each $i = 1, \ldots, m$,
where $U_i = U_t$ for some $t$.  There is an $\epsilon>0$ so that if
$H\colon I\times I\to X$ is an $\epsilon$--path-homotopy from $\alpha$
to a path $\beta$, then $H([t_{i-1},t_i]\times I)\subseteq U_i$.

For each $i = 1, \ldots, m$, let $\alpha_i = \alpha\big|[t_{i-1},t_i]$ and
$\beta_i = \beta\big|[t_{i-1},t_i]$, and for $i = 0, \ldots, m$, let $\gamma_i =
H\big|t_i\times I$  and $\eta_i = (\alpha_{t_i})_*(\eta)$.
By \fullref{C-locuniq-two},
$$(\alpha_i)_*(\eta_{i-1}) =
  (\gamma_{i-1}\beta_i\gamma_i^{-1})_*(\eta_{i-1})=\eta_i$$
where $\eta_0 = \eta$.  Since $\gamma_0$ and $\gamma_n$ are
the constant paths, it follows easily that $$\alpha_*(\eta) =
(\alpha_n)_*\cdots(\alpha_1)_*(\eta) = (\beta_n)_*\cdots(\beta_1)_*(\eta)
= \beta_*(\eta).\eqno\qed$$

\begin{proof}[Proof of \fullref{T-main}]
As indicated at the beginning of this note, we need only show that the
hypotheses of \cite[Theorem~15.2]{gB67} are satisfied.

Since $X$ is an \enr, it is locally contractible, and hence cohomologically
locally connected over $\mathbb{Z}$.

Given $x \in X$, let $W$ be a path-connected neighborhood of $x$ such that
$W$ is contractible in $X$.  If $\alpha$ and $\beta$ are two paths in $W$
from $x$ to a point $y\in W$, then $\alpha$ and $\beta$ are path-homotopic
in $X$.  Hence, by \fullref{L-uniq},
$\alpha_*\colon H_k(X,X-x) \to H_k(X,X-y)$ is a well-defined
isomorphism that is independent of $\alpha$ for every $k\ge 0$.  Hence,
$\mathcal{H}_k\big|W$ is the constant sheaf, and so $\mathcal{H}_k$ is
locally constant.

Finally, we need to show that the local homology groups of $X$ are
finitely generated.  This can be seen by working with a mapping cylinder
neighborhood of $X$.
Assume $X$ is nicely embedded in ${\mathbb{R}}^{n+m}$, for some $m\ge 3$,
so that  $X$  has a mapping cylinder neighborhood
$N = C_{\phi}$ of a map $\phi\colon \partial N\to X$, with mapping
cylinder projection $\pi\colon N\to X$ (see \cite{rM76}). Given a subset
$A\subseteq X$, let $A^* = \pi^{-1}(A)$ and
$\dot A = \phi^{-1}(A)$.

\begin{lemma}\label{L-dual}
If $A$ is a closed subset of $X$, then $H_k(X,X-A)\cong \wwcheck
H_c^{n+m-k}(A^*,\dot A).$
\end{lemma}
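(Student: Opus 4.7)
The plan is to combine two standard ingredients: the mapping cylinder structure on $N$ (which turns the left-hand side into the local homology of a nice topological manifold), and Poincar\'e--Lefschetz duality (which converts that into the \v Cech cohomology on the right).

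I would begin by exploiting the mapping cylinder projection $\pi\colon N\to X$, which is a strong deformation retraction along the cylinder lines. Because $A^*=\pi^{-1}(A)$, this retraction restricts to a deformation retraction of $A^*$ onto $A$, and simultaneously to one of $N-A^*$ onto $X-A$. Consequently the pair inclusion $(X,X-A)\hookrightarrow (N,N-A^*)$ is a homotopy equivalence of pairs, so
$$H_k(X,X-A)\cong H_k(N,N-A^*).$$

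Next, since $N$ is (by construction, and by the choice $m\ge 3$ that guarantees a mapping cylinder neighborhood exists) a codimension-zero submanifold of $\mathbb{R}^{n+m}$, it is an orientable topological $(n+m)$--manifold with boundary $\partial N$. Poincar\'e--Lefschetz duality in the sheaf-theoretic form (as in Bredon's sheaf theory book) then yields, for the closed subset $A^*\subseteq N$,
$$H_k(N,N-A^*)\cong \wwcheck H_c^{n+m-k}\bigl(A^*,\,A^*\cap\partial N\bigr).$$
Finally, on $\partial N$ the projection $\pi$ coincides with $\phi$, so $A^*\cap\partial N=\phi^{-1}(A)=\dot A$. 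Combining these three identifications gives the desired isomorphism.

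I expect the main obstacle to be the second step: one must invoke the relative Lefschetz duality statement with the correct (\v Cech, compact-support) conventions and verify that $N$ is sufficiently nice for it to apply, namely that it is orientable as a topological manifold with boundary and that $A^*$ meets $\partial N$ in precisely $\dot A$. The first and third steps amount to formal manipulations with the mapping cylinder definition and should be routine.
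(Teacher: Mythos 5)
Your proof is correct and is essentially the paper's argument: first identify $H_k(X,X-A)$ with $H_k(N,N-A^*)$ using the mapping cylinder projection, then dualize $A^*$ inside the manifold $N\subseteq\mathbb{R}^{n+m}$. The only divergence is in packaging: where you cite relative Lefschetz duality for the manifold with boundary $(N,\partial N)$ in one step, the paper derives it on the spot --- using the collar on $\partial N$ to replace $(N,N-A^*)$ by $(\intr N,\intr N - A^*)$, applying Alexander duality there to the closed set $A^*-\dot A$, and then using the collar of $\dot A$ in $A^*$ to identify $\wwcheck H_c^{n+m-k}(A^*-\dot A)$ with $\wwcheck H_c^{n+m-k}(A^*,\dot A)$.
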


\begin{proof}
Suppose $A$ is closed in $X$. Since $\pi\colon N\to
X$ is a proper homotopy equivalence, $$H_k(X,X-A)\cong H_k(N,N-A^*).$$
Since
$\partial N$ is collared in $N$,
$$H_k(N,N-A^*)\cong H_k(\intr{N},\intr{N} - A^*),$$
and by Alexander duality,
$$H_k(\intr{N},\intr{N} - A^*)\cong\wwcheck
H_c^{n+m-k}(A^*-\dot A)$$ $$\cong
\wwcheck H_c^{n+m-k}(A^*,\dot A)$$
(since $\dot A$ is also collared in $A^*$).
\end{proof}

Since $X$ is $n$--dimensional, we get the following corollary.

\begin{corollary}\label{C-vanish}
If $A$ is a closed subset of $X$, then $\wwcheck H_c^{q}(A^*,\dot A) =
0$, if $q<m$ or $q>n+m.$
\end{corollary}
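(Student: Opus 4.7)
The plan is to derive the corollary directly from \fullref{L-dual}. That lemma identifies $\wwcheck H_c^q(A^*,\dot A)$ with the singular group $H_{n+m-q}(X,X-A)$, so after the substitution $k=n+m-q$ the statement reduces to showing that $H_k(X,X-A)=0$ whenever $k<0$ or $k>n$.

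The upper range $q>n+m$ corresponds to $k<0$, where singular homology vanishes automatically; there is nothing to check.

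The substantive case is $q<m$, equivalently $k>n$: one needs $H_k(X,X-A)=0$ for $k$ strictly exceeding the dimension of $X$. This is a standard consequence of $X$ being a finite-dimensional \enr: its cohomological dimension over $\mathbb{Z}$ is at most $n$, and on \enr\ pairs singular homology can be computed via a theory satisfying the dimension axiom, so no classes survive in degrees beyond $n$. The main (essentially only) obstacle in writing out the argument is giving a clean justification for this last dimensional vanishing. If one wanted a fully internal proof, one could run a variant of the duality in \fullref{L-dual} on compact exhaustions of $A$ and reinterpret $\wwcheck H_c^q(A^*,\dot A)$ for small $q$ as a colimit of singular groups carried on subsets of the $n$-dimensional piece $X$, where the dimension bound takes over. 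Apart from this dimensional bookkeeping, the corollary is a one-line consequence of \fullref{L-dual}.
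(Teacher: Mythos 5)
Your proof is correct and is essentially the paper's own argument: the corollary is stated as an immediate consequence of \fullref{L-dual} together with the fact that $X$ is $n$--dimensional, which is exactly your reduction of $\wwcheck H_c^{q}(A^*,\dot A)$ to $H_{n+m-q}(X,X-A)$, vanishing trivially for $n+m-q<0$ and by the dimension of $X$ for $n+m-q>n$. The paper gives no more justification of the dimensional vanishing than you do, so your level of detail matches its.
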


Thus, the local homology sheaf $\mathcal{H}_k$ of $X$ is isomorphic
to the Leray sheaf $\mathcal{H}^{n+m-k}$ of the map $\pi\colon N\to X$
whose stalks are $\wwcheck H^{n+m-k}(x^*,\dot x)$.  For each $k\ge 0$, this
sheaf is also locally constant, so there is a path-connected neighborhood
$U$ of $x$ such that $\mathcal{H}^{q}\big|U$ is constant for all $q\ge 0$.
Given such a $U$, there is a path-connected neighborhood $V$ of $x$ lying
in $U$ such that the inclusion of $V$ into $U$ is null-homotopic.  Thus,
for any coefficient group $G$, the inclusion $H^p(U,G)\to H^p(V,G)$
is zero if $p\not= 0$ and is an isomorphism for $p = 0$.

The Leray spectral sequences of $\pi\big|\pi^{-1}(U)$ and $\pi\big|\pi^{-1}(V)$
have $E_2$ terms
$$E^{p,q}_2(U) \cong H^p(U;\mathcal{H}^q), \qquad E^{p,q}_2(V) \cong
H^p(V;\mathcal{H}^q)$$
and converge to
$$E^{p,q}_{\infty}(U) \subseteq H^{p+q}(U^*,\dot U;\mathbb{Z}), \qquad
E^{p,q}_{\infty}(V) \subseteq H^{p+q}(V^*,\dot V;\mathbb{Z}),$$
respectively (see \cite[Theorem~6.1]{gB67}).  Since the sheaf
$\mathcal{H}^q$ is constant on $U$ and $V$, $H^p(U;\mathcal{H}^q)$
and $H^p(V;\mathcal{H}^q)$ represent ordinary cohomology groups with
coefficients in $G_q \cong \wwcheck H^{q}(x^*,\dot x).$

By naturality, we have the commutative diagram
\[\xymatrix{E^{0,q}_2(U) \ar[r]\ar[d]^{\cong} & E^{2,q-1}_2(U)\ar[d]_0\\
        E^{0,q}_2(V) \ar[r] & E^{2,q-1}_2(V)}\]
which implies that the differential $d_2\colon E^{0,q}_2(V) \to
E^{2,q-1}_2(V)$ is the zero map.  Hence,
$$E^{0,q}_3(V) = \ker\bigl(E^{0,q}_2(V) \to
  E^{2,q-1}_2(V)\bigr)/\im\bigl(E^{-2,q+1}_2(V)
\to E^{0,q}_2(V)\bigr) = E^{0,q}_2(V),$$
and, similarly,  $E^{0,q}_3(V) = E^{0,q}_4(V) = \cdots =
E^{0,q}_{\infty}(V).$  Thus,
$$H^{q}(V^*,\dot V;\mathbb{Z})\supseteq E^{0,q}_{\infty}(V) \cong
E^{0,q}_2(V) \cong H^0(V;\mathcal{H}^q)\cong H^0(V;G_q)\cong G_q.$$
Applying the same argument to the inclusion $(x^*,\dot x)\subseteq
(V^*,\dot V)$ yields the commutative diagram
\[\xymatrix{E^{0,q}_2(V) \ar[r]\ar[d]^{\cong} & E^{2,q-1}_2(V)\ar[d]_0\\
        E^{0,q}_2(x) \ar[r] & E^{2,q-1}_2(x)}\]
which, in turn, gives
\[\xymatrix{G_q\cong
H^0(V;G_q)\ar[r]^{\cong}\ar[d]_{\cong}&H^q(V^*,\dot
V;\mathbb{Z})\ar[d]\\
        G_q\cong H^0(x;G_q)\ar[r]^{\cong}&H^q(x^*,\dot x;\mathbb{Z})\cong
        G_q}\]
from which it follows that the inclusion $H^q(V^*,\dot V;\mathbb{Z})\to
H^q(x^*,\dot x;\mathbb{Z}) \cong G_q$ is an isomorphism of $G_q$.  Since
$(x^*,\dot x)$ is a compact pair in the manifold pair $(V^*,\dot
V)$, it has a compact manifold pair neighborhood $(W,\partial W)$.
Since the inclusion $H^{q}(V^*,\dot V)\to \wwcheck H^{q}(x^*,\dot x)$
factors through $H^{q}(W,\partial W)$, its image is finitely generated
for each $q$.  Hence, $H_k(X,X-x)\cong\wwcheck H^{n+m-k}(x^*,\dot x)$
is finitely generated for each $k$.
\end{proof}

The following theorem, which may be of independent interest, emerges
from the proof of \fullref{T-main}.

\begin{theorem}
Suppose $X$ is an $n$--dimensional \enr\ whose local homology sheaf
$\mathcal{H}_k$ is locally constant for each $k\ge 0$.  Then $X$ is a
homology $n$--manifold.
\end{theorem}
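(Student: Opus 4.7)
The plan is to verify the hypotheses of Bredon's Theorem~15.2 \cite{gB67} exactly as in the proof of \fullref{T-main}, noting that homological arc-homogeneity was used there only to establish local constancy of the sheaves $\mathcal{H}_k$. Two of the three hypotheses are immediate under the weaker assumption: local constancy of $\mathcal{H}_k$ is given, and cohomological local connectedness over $\mathbb{Z}$ follows from $X$ being an \enr{} (hence locally contractible). What remains is to prove that each local homology group $H_k(X,X-x)$ is finitely generated.

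For this, I would recycle the mapping cylinder neighborhood argument from the end of the proof of \fullref{T-main}, since it makes no use of homological arc-homogeneity. Concretely, embed $X$ in $\mathbb{R}^{n+m}$ for some $m\ge 3$ with mapping cylinder neighborhood $N=C_\phi$ of $\phi\colon\partial N\to X$. \fullref{L-dual} identifies $H_k(X,X-A)\cong\wwcheck{H}_c^{n+m-k}(A^*,\dot A)$ for closed $A\subseteq X$, and in particular identifies $\mathcal{H}_k$ with the Leray sheaf $\mathcal{H}^{n+m-k}$ of $\pi\colon N\to X$. By hypothesis this Leray sheaf is locally constant.

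Given $x\in X$, choose a path-connected neighborhood $U$ of $x$ on which every $\mathcal{H}^q$ is constant, and inside $U$ a path-connected neighborhood $V$ whose inclusion into $U$ is null-homotopic. The Leray spectral sequences for $\pi\big|\pi^{-1}(U)$ and $\pi\big|\pi^{-1}(V)$, together with naturality of the $d_2$ differentials (which vanish on $V$ because they factor through $H^2(U;\mathcal{H}^{q-1})\to H^2(V;\mathcal{H}^{q-1})$, a zero map), collapse the edge terms $E_\infty^{0,q}(V)\cong E_2^{0,q}(V)\cong G_q$. Comparing with the same argument for $(x^*,\dot x)\subseteq(V^*,\dot V)$ shows that $H^q(V^*,\dot V;\mathbb{Z})\to\wwcheck{H}^q(x^*,\dot x;\mathbb{Z})\cong G_q$ is an isomorphism. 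Since $(x^*,\dot x)$ is a compact pair inside the manifold pair $(V^*,\dot V)$, it admits a compact manifold-pair neighborhood $(W,\partial W)$, and the isomorphism factors through the finitely generated group $H^q(W,\partial W)$. Hence $G_q$, and therefore $H_{n+m-q}(X,X-x)$, is finitely generated.

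There is essentially no main obstacle here: the proof is a direct extraction of the second half of the proof of \fullref{T-main}, the only small care being to verify that local constancy of $\mathcal{H}_k$ alone (without the arc-homogeneity hypothesis) is enough input for the Leray spectral sequence comparison. It is, so the argument goes through verbatim.
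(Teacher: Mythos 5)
Your proposal is correct and matches the paper's intent exactly: the paper gives no separate argument for this theorem, remarking only that it ``emerges from the proof'' of \fullref{T-main}, and your extraction of that proof --- local constancy by hypothesis, cohomological local connectedness from the \enr{} property, and finite generation of $H_k(X,X-x)$ via the mapping cylinder neighborhood, \fullref{L-dual}, and the Leray spectral sequence comparison --- is precisely the intended argument.
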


\bibliographystyle{gtart}
\bibliography{link}

\end{document}